\documentclass[a4paper, 11pt]{article}
\usepackage{amsmath}
\usepackage{amssymb}
\usepackage{amsthm}
\usepackage{amsfonts}
\usepackage{latexsym}
\usepackage{mathtools}
\usepackage{empheq}
\usepackage[dvipdfmx]{graphicx}
\usepackage{comment}
\usepackage{cases}
\usepackage{float}
\usepackage{color}
\usepackage[top=26truemm, bottom=25truemm, left=25.5truemm, right=25.5truemm]{geometry}
\usepackage{titlefoot}
\usepackage{subcaption}

\allowdisplaybreaks[1]

\newcommand{\capR}{\mathbb{R}}


\newcommand{\footremember}[2]{%
	\footnote{#2}
	\newcounter{#1}
	\setcounter{#1}{\value{footnote}}%
}
\newcommand{\footrecall}[1]{%
	\footnotemark[\value{#1}]%
} 

\let\OLDthebibliography\thebibliography
\renewcommand\thebibliography[1]{
	\OLDthebibliography{#1}
	\setlength{\parskip}{1.5pt}
	\setlength{\itemsep}{1pt plus 0.3ex}
}

\makeatletter

\@addtoreset{equation}{section}
\makeatother

\theoremstyle{mystyle}
\newtheorem{theorem}{Theorem}[section]
\newtheorem*{theorem*}{Theorem}

\newtheorem{proposition}[theorem]{Proposition}

\theoremstyle{definition}

\theoremstyle{remark}



\begin{document}
\title{(Dis)connectedness of nonlocal minimal surfaces in a cylinder and a stickiness property}

\author{%
	Serena Dipierro\footremember{alley}{Department of Mathematics and Statistics, University of Western Australia, 35 Stirling Hwy, Crawley WA 6009, Australia. E-mail: {\tt serena.dipierro@uwa.edu.au}}%
	\and Fumihiko Onoue\footremember{trailer}{Scuola Normale Superiore, Piazza dei Cavalieri 7, Pisa 56126, Italy. E-mail: {\tt fumihiko.onoue@sns.it}}%
	\and Enrico Valdinoci\footrecall{alley} \footnote{E-mail: {\tt enrico.valdinoci@uwa.edu.au}}%
}

\date{\today}

\maketitle

\begin{abstract}
We consider nonlocal minimal surfaces in a cylinder with prescribed
datum given by the complement of a slab. We show that when
the width of the slab is large the minimizers are disconnected and
when the width of the slab is small the minimizers are connected.
This feature is in agreement with the classical case of the minimal surfaces.

Nevertheless, we show that when
the width of the slab is large the minimizers are not flat discs,
as it happens in the classical setting, and, in particular,
in dimension~$2$ we provide a quantitative bound on
the stickiness property exhibited by the minimizers.

Moreover, differently from the classical case,
we show that when the width of the slab is small then the minimizers
completely adhere to the side of the cylinder, thus providing a further
example of stickiness phenomenon.
\end{abstract}

\section{Introduction}

Nonlocal minimal surfaces were introduced in~\cite{CRS}
and constitute one of the most fascinating, and challenging,
research topics in the realm of fractional equations. Roughly speaking,
the problem is that of minimizing an energy functional
built by the pointwise interaction of a set versus its complement
(this energy functional can also be conveniently ``localized''
in a given domain by taking into account the interactions
in which at least one point lies in the domain).
The prototype interaction taken into account is scaling and translation invariant and
with polynomial decay
(but we mention that other versions of the problem considered also interactions
via integrable kernels, see~\cite{MAZ1, MAZ2, CINSE}).

The nonlocal minimal surfaces constructed by this minimization procedure
have relevant features in terms of differential geometry
and geometric measure theory, since their energy functional
can be considered as a nonlocal approximation of the classical perimeter
functional and the nonlocal minimal surfaces as a fractional variant of
the classical minimal surfaces, see
\cite{BOU, DAVI, PON, CA1, AMB, CA2}.
Critical points of the nonlocal perimeter energy functional satisfy
an integral relation that can be seen as a vanishing nonlocal mean curvature prescription
(see~\cite{CRS, NOTI, LAW, CINDEL}) and accordingly the study
of volume prescribed minimizers leads to the analysis of surfaces with constant
nonlocal mean curvature (see~\cite{DAVN, CAB1, CAB2, CIRA}).
Moreover,
nonlocal minimal surfaces arise as the large-scale limit
of long-range phase coexistence models (see \cite{SAVA}),
as discrete iterations of fractional heat equations (see~\cite{SOU})
and as continuous approximations
of interfaces of long-range Ising models (see \cite{COZVA}).
\medskip

Given the importance of nonlocal minimal surfaces
from all these perspectives, it is desirable to develop some intuition
about their basic geometric features. For this,
since it is very rare to have explicit solutions and precise formulas
which entirely describe nonlocal minimal surfaces, it is often convenient
to focus on some simplified cases in which the reference domain and the external
datum possess some special characteristics which lead to a deep understanding
of at least some cardinal aspects of the object under investigation.\medskip

This note follows precisely in this line of research, namely we will consider
a very simple domain, that is a vertical cylinder in~$\capR^n$, and a very special
external datum, that is the complement of
a horizontal slab, and detect how the minimizers of the nonlocal perimeter functional
change when the width of the slab varies.\medskip

On the one hand, when the width of the slab is large, we will show that
these minimizers are disconnected, and this is somehow the nonlocal counterpart
of the fact that the classical perimeter gets minimized by far-away parallel and co-axial discs.

On the other hand,
when the width of the slab is small, we will show that
these minimizers become connected. This change of topology is in agreement with the classical case,
since perimeter minimizers constrained to two nearby parallel and co-axial circumferences
are connected necks of catenoids. Nonetheless, the specific
geometry exhibited in this case by nonlocal minimal surfaces is rather different
from
that of catenoids, since we will additionally show that when the width of the slab is small
the nonlocal minimal surface obtained with this procedure actually coincides inside the cylinder with the
cylinder itself.\medskip

More precisely, and in further detail,
the mathematical framework that we use in this paper is the
one
introduced in \cite{CRS} and can be summarized as follows.
Let $s\in(0,\,1)$ and $\Omega \subset \capR^n$ be an open subset
with Lipschitz boundary. Then we define the nonlocal perimeter or $s$-perimeter $P_s(E\,;\,\Omega)$ for a measurable set $E\subset \capR^n$ by
\begin{equation}\label{defiNonlocalPeri}
P_s(E\,;\,\Omega) := \int_{E\cap \Omega}\int_{E^c}\frac{dx\,dy}{|x-y|^{n+s}}+
\int_{E\cap \Omega^c}\int_{\Omega\cap E^c}\frac{dx\,dy}{|x-y|^{n+s}},
\end{equation}
where we denote by $E^c$ the complement of a set $E$.
We say that a set $E \subset \capR^n$ is a $s$-minimizer or $s$-minimal set in $\Omega$ if it holds that $P_s(E\,;\Omega')
\leq P_s(F\,;\Omega')$ for any open, bounded, and Lipschitz set~$\Omega'$ contained in~$\Omega$
and any~$F \subset \capR^n$ with $F \setminus \Omega' = E \setminus \Omega'$. 
See also~\cite{LOM} for additional details regarding the minimization procedure
in bounded or unbounded domains.\medskip

For our purposes,
we will often
denote coordinates in~$\capR^n$ by~$x=(x',x_n)\in\capR^{n-1}\times\capR$
and we will focus here on the case of ``cylindrical'' domains of the form
\begin{equation}\label{omega}
\Omega := \{x=(x',x_n)\in\capR^{n-1}\times\capR {\mbox{ s.t. }} |x'|<1\}.
\end{equation}
We are interested in sets~$E$ whose exterior prescription outside~$\Omega$
is the complement of a strip. Namely, given~$M>0$, we define
\begin{equation}\label{E0}
E_0:= \{x=(x',x_n)\in\capR^{n-1}\times\capR {\mbox{ s.t. }} |x_n|>M\}
\end{equation}
and we consider $s$-minimal sets in~$\Omega$ such that~$E\setminus \Omega=E_0\setminus\Omega$. See e.g.~\cite[Theorem 0.2.5]{LOM}
for existence results for this type of $s$-minimal sets.

Our main concern in this note is how the variation of the parameter~$M$
affects the topological property of the $s$-minimizer and we will show that
{\em for small values of~$M$ the $s$-minimizer is connected} while {\em for
large values it is disconnected}. 

Furthermore, we will show that {\em for small values of~$M$
the $s$-minimizer in~$\Omega$ coincides with~$\Omega$ itself},
and this is an interesting difference with respect to the classical case of minimal surfaces.
Indeed, when~$n\ge3$ minimal surfaces in a cylinder do not coincide with the cylinder itself
and, when connected, they develop a ``neck'' inside the cylinder, as exhibited by the classical example of the
catenoid (as a matter of fact, when~$n\ge3$
the cylinder does not have vanishing mean curvature, hence
it cannot be a minimizer for the classical perimeter functional).

Therefore, our construction of nonlocal minimal surfaces
that coincide with the cylinder in their free domain
heavily relies on the nonlocal character of the problem taken into consideration
and can be seen as a new example of the {\em stickiness theory for nonlocal minimal surfaces}
which was introduced in~\cite{DSVboundary} and developed in~\cite{DSVGraphProp, BLV, DSVminiGraph, DSV3D}.
See also~\cite{DSVDGB, 234DZ} for surveys on nonlocal minimal surfaces
discussing, among other topics, the stickiness phenomenon (and, for instance~\cite{HaSi} to appreciate the structural
differences with respect to the classical case).\medskip

In further detail, the precise result that we have concerning the connectedness
of the $s$-minimizer and its stickiness properties for small values of~$M$
goes as follows:

\begin{figure}
	\begin{subfigure}{.5\textwidth}
		\centering
		\includegraphics[width=.8\linewidth]{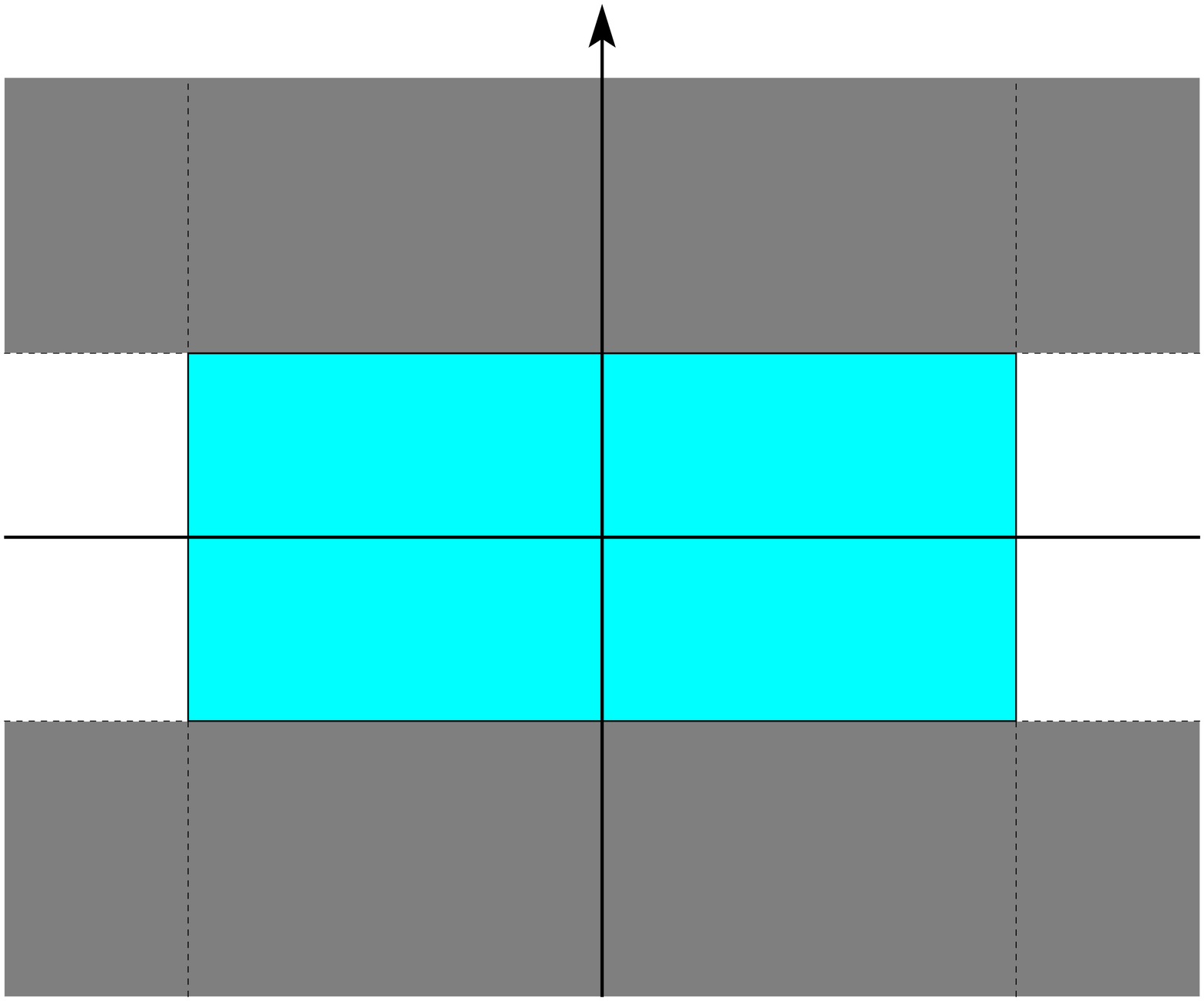}
		\caption{}
		\label{figure-0-MINI1}
	\end{subfigure}%
	\begin{subfigure}{.5\textwidth}
		\centering
		\includegraphics[width=.8\linewidth]{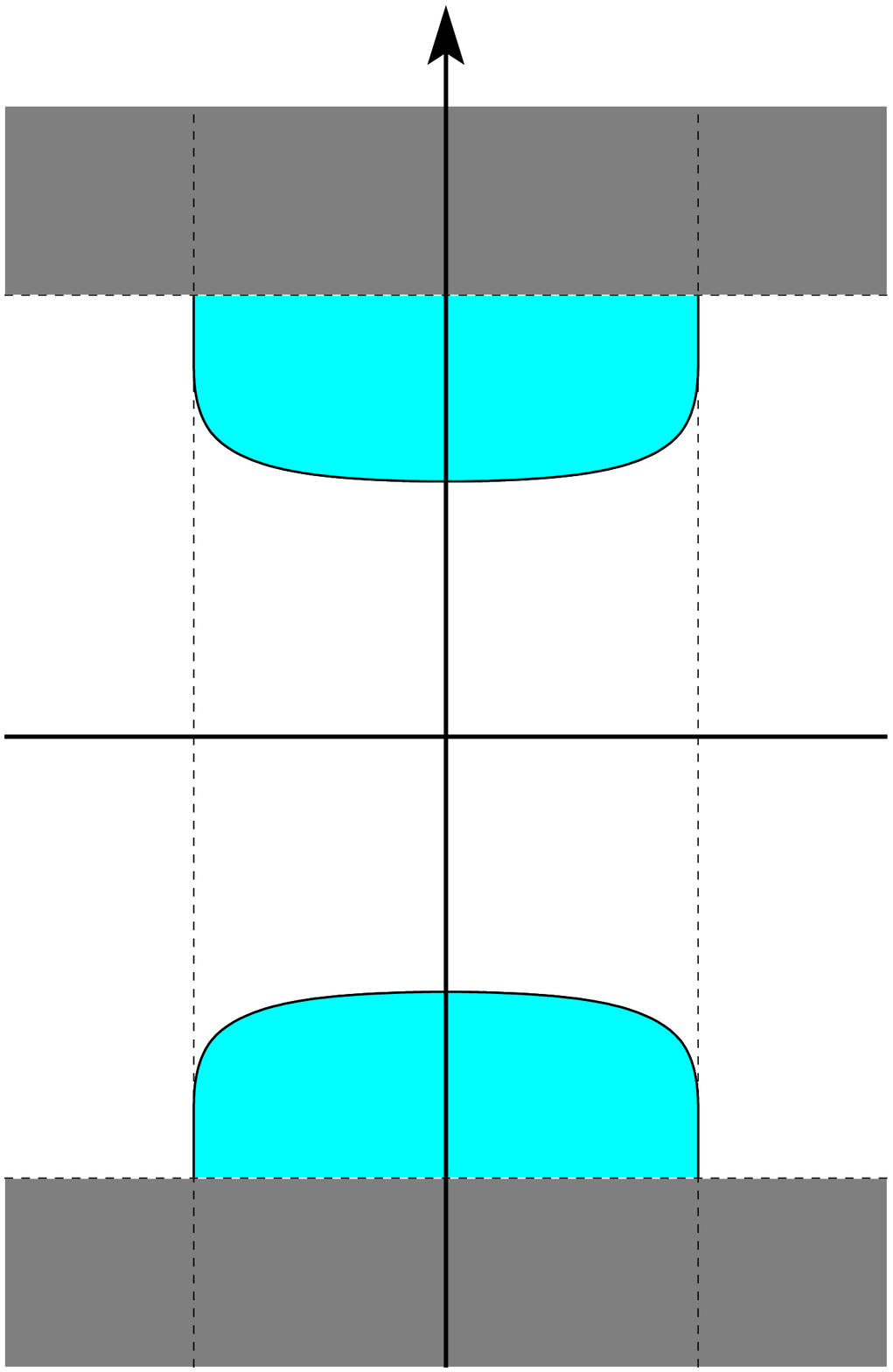}
		\caption{}
		\label{figure-0-MINI2}
	\end{subfigure}
	\caption{The minimizers in Theorem~\ref{mainTheorem} (left) and Theorem~\ref{mainTheorem02} (right).}
\end{figure}

\begin{theorem}\label{mainTheorem}
	Let $\Omega$ be as in~\eqref{omega}
	and let $E_0$ be
	defined by \eqref{E0}. Then, there exists~$M_0\in(0,1)$, depending
	only on~$n$ and~$s$, such that,
	for any $M\in(0,\,M_0)$, the minimizer~$E_M$ in~$\Omega$ of~$P_s$
	coincides with~$\Omega$. In particular, $E_M$ is connected.
\end{theorem}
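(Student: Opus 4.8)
The plan is to prove that the set $E^\star:=\Omega\cup E_0=\{|x'|<1\}\cup\{|x_n|>M\}$ is $s$-minimal in $\Omega$; since $E^\star\cap\Omega=\Omega$, this is exactly the assertion that $E_M=E^\star$ coincides with $\Omega$, and connectedness follows at once. The point driving the argument is that the complement $R:=(E^\star)^c=\Omega^c\cap\{|x_n|\le M\}$ lies inside the slab $\{|x_n|\le M\}$, hence is a very thin ``obstacle'' when $M$ is small. To make this quantitative I would first reduce the minimality of $E^\star$ to a single inequality. Let $\Omega'\subseteq\Omega$ be open, bounded and Lipschitz and let $F$ agree with $E^\star$ outside $\Omega'$; since $\Omega'\subseteq\Omega\subseteq E^\star$, one necessarily has $F=E^\star\setminus D$ with $D:=E^\star\setminus F\subseteq\Omega'\subseteq\Omega$. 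Writing $L(A,B):=\int_A\int_B|x-y|^{-n-s}\,dx\,dy$ for disjoint sets, and using that $R\subseteq\Omega^c\subseteq D^c$, so that $D^c=(E^\star\setminus D)\sqcup R$, a direct computation of the interaction terms defining $P_s(\,\cdot\,;\Omega')$ gives the identity
\[
P_s(F;\Omega')-P_s(E^\star;\Omega')=L(E^\star\setminus D,D)-L(D,R)=P_s(D;\capR^n)-2\,L(D,R).
\]
Thus it suffices to produce $M_0=M_0(n,s)\in(0,1)$ such that $P_s(D;\capR^n)\ge 2\,L(D,R)$ for all $M\in(0,M_0)$ and all bounded measurable $D\subseteq\Omega$. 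Setting $R_+:=\Omega^c\setminus R=\Omega^c\cap\{|x_n|>M\}$ and decomposing $D^c=(\Omega\setminus D)\sqcup R\sqcup R_+$, this is equivalent to $L(D,R)\le L(D,R_+)+L(D,\Omega\setminus D)$; call this inequality $(\star)$.

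Next I would slice vertically. For $x=(x',x_n)\in\Omega$, put $\delta(x):=1-|x'|$ and
\[
g_{x'}(u):=\int_{\{|y'|\ge1\}}\big(|x'-y'|^2+u^2\big)^{-\frac{n+s}{2}}\,dy',
\]
so that $g_{x'}$ is even and non-increasing in $|u|$, with $g_{x'}(u)\asymp\min\big(\delta(x)^{-1-s},|u|^{-1-s}\big)$ and $\int_{\capR}g_{x'}(u)\,du\asymp\delta(x)^{-s}$, and with $\int_R|x-y|^{-n-s}\,dy=\int_{x_n-M}^{x_n+M}g_{x'}(u)\,du$ while $\int_{R_+}|x-y|^{-n-s}\,dy=\int_{\capR}g_{x'}-\int_{x_n-M}^{x_n+M}g_{x'}$. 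Since $g_{x'}$ is even and non-increasing, $\int_{x_n-M}^{x_n+M}g_{x'}\le\int_{-M}^{M}g_{x'}\lesssim M\,\delta(x)^{-1-s}$; comparing with $\int_{\capR}g_{x'}\gtrsim\delta(x)^{-s}$ one finds $\kappa=\kappa(n,s)\ge1$ such that $\delta(x)\ge\kappa M$ implies $\int_R|x-y|^{-n-s}\,dy\le\frac12\int_{R_+}|x-y|^{-n-s}\,dy$. Integrating this over $\{x\in D:\ \delta(x)\ge\kappa M\}$ bounds the corresponding part of $L(D,R)$ by $\frac12 L(D,R_+)$, which disposes of everything away from the lateral boundary in $(\star)$.

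It remains to estimate $L(D\cap\mathcal N,R)$ on the collar $\mathcal N:=\{x\in\Omega:\ \delta(x)<\kappa M\}$, and this is the heart of the matter. The pointwise comparison just used fails on $\mathcal N$: when $x_n\approx0$ the slab $R$ lies at distance only $\delta(x)$ from $x$ — much closer than the $\approx M$ distance to its vertical reflections inside $R_+$ — so $\int_R|x-y|^{-n-s}\,dy$ is of size $\delta(x)^{-s}$ rather than $M\,\delta(x)^{-1-s}$. Here I would combine two ingredients. First, $R$ is vertically thin: slicing as above, using that for each $x\in\mathcal N$ the vertical window has length $2M$, and comparing with the lateral part $L(D,\{|y'|\ge1\})\le P_s(D;\capR^n)$ of the perimeter, one gets a bound of the form $L(D\cap\mathcal N,R)\le C(n,s)\,M^{1-s}\,P_s(D;\capR^n)$, the exponent $1-s<1$ being exactly what is needed. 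Second, for the points of $\mathcal N$ with $\delta(x)$ even smaller than $M$, where the crude averaging is too lossy, I would dominate the short-range interaction of $D$ with $R$ by reflecting $R$ across the hyperplanes $\{y_n=\pm M\}$ (the copies lie disjointly inside $R_+$) and, near the edge $\{|x'|=1,\ x_n=0\}$, by a reflection across a tangent hyperplane to $\partial\Omega$, which maps $\Omega$ isometrically into $\Omega^c$; this absorbs that interaction into $L(D,R_+)+L(D,\Omega\setminus D)$ up to an error again of order $M^{1-s}$. Choosing $M_0$ so small that $C(n,s)\,M_0^{1-s}<\frac12$ then closes $(\star)$, and with it the theorem.

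The step I expect to be the main obstacle is the collar estimate just sketched. The natural pointwise comparison between the interactions of $D$ with $R$ and with $\Omega^c\setminus R$ genuinely breaks down in the $O(M)$-neighbourhood of $\partial\Omega$ at heights near $\{x_n=0\}$, and turning the vertical thinness of the slab (together with the reflection arguments) into an estimate that beats the possibly large short-range quantity $L(D\cap\mathcal N,R)$ — uniformly over all competitors $D$, including those concentrated near the edge $\{|x'|=1,\ x_n=0\}$ — is the crux of the proof.
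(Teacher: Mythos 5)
Your reduction to the inequality $(\star)$ is correct, and a direct energy comparison showing that $\Omega\cup E_0$ is itself $s$-minimal would indeed yield the theorem (and in fact slightly more). However, the collar estimate that you yourself single out as the crux is not established, and the ingredients you sketch for it do not work as stated. First, the intermediate bound $L(D\cap\mathcal{N},R)\le C(n,s)\,M^{1-s}P_s(D;\capR^n)$ is false: take $D$ to be a thin annular shell $\{1-\epsilon<|x'|<1,\ |x_n|<M/2\}$ with $\epsilon\ll M$; then both $L(D,R)$ and $L(D,\{|y'|\ge1\})\le P_s(D;\capR^n)$ are of order $\epsilon^{1-s}M$ with constants independent of $M$, so no gain of order $M^{1-s}$ is available. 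Second, reflecting $R$ across $\{y_n=\pm M\}$ does not dominate the short-range interaction pointwise: for $x\in D$ with $x_n\approx 0$ and $\delta(x)\ll M$, the nearest points of $R$ are at distance $\delta(x)$ while their reflected images in $R_+$ are at distance of order $M$, so the kernel at the original point is much larger than at its image, exactly in the regime you need to control. Third, the tangent-hyperplane reflection maps $\Omega$ isometrically into $\Omega^c$, but what is required is a measure-preserving injection of the near part of $R\subset\Omega^c$ into $\Omega\setminus D$ that decreases distances to every $x\in D$; the natural reflection only lands in $\Omega$, and may land in $D$ itself (for instance if $D$ is chosen to be the reflected image of a piece of $R$), in which case it contributes nothing to $L(D,\Omega\setminus D)$. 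So $(\star)$, while true a posteriori, is not proved, and the gap sits precisely at the point you identify as the main obstacle.

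For comparison, the paper's proof avoids any uniform-in-$D$ energy inequality. It starts from the inclusion $\{|x_n|>M\}\subset E_M$ given by~\cite[Corollary 5.3]{CRS}, slides a ball $B_{r_0}(te_n)$ with $r_0<1$ vertically until it touches $\partial E_M$ inside $\Omega$, and at a touching point $q$ invokes the Euler--Lagrange inequality in the viscosity sense, $\int_{\capR^n}(\chi_{E_M^c}-\chi_{E_M})|y-q|^{-n-s}\,dy\ge0$. Since $E_M^c$ is trapped in the slab $\capR^{n-1}\times[q_n-2M,q_n+2M]$ minus the touching ball, a cancellation between that ball and its reflection through $q$, combined with~\cite[Lemma 3.1]{DSVGraphProp} applied to the region between the two tangent balls, bounds the positive contribution by $C\Lambda^{1-s}M^{1-s}+C_0\Lambda^{-1-s}M^{-s}$, while the complement of the slab contributes $-cM^{-s}$; choosing $\Lambda$ large and then $M$ small yields a contradiction, and letting $r_0\to1$ gives $\Omega\subset E_M$. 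If you wish to salvage your calibration-style approach, you would need a genuinely new argument near the lateral boundary (some slicing or rearrangement inequality comparing $L(D,R)$ with $L(D,\Omega\setminus D)$ uniformly in $D$), and that appears to be of difficulty comparable to the theorem itself.
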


The minimizer described in
Theorem~\ref{mainTheorem}
is depicted in Figure~\ref{figure-0-MINI1}.
As a counterpart
of Theorem~\ref{mainTheorem}, the disconnectedness result for large values of $M$
is the following:

\begin{theorem}\label{mainTheorem02}
	Let $\Omega$ be as in~\eqref{omega}
	and let $E_0$ be
	defined by \eqref{E0}.
Then, there exists~$M_0>1$, depending only on~$n$ and~$s$, such that,
for any~$M > M_0$, the minimizer~$E_M$ in~$\Omega$ of~$P_s$
is disconnected.
\end{theorem}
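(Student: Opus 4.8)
The plan is to show that for $M$ large the competitor consisting of two half-spaces $\{|x_n|>M\}$ itself (i.e.\ $E_M = E_0$, so that $E_M\cap\Omega=\emptyset$ and the minimizer is disconnected as its two pieces are at distance $2M$) has strictly smaller $s$-perimeter in $\Omega$ than any connected competitor. I would argue by contradiction: suppose the minimizer $E_M$ is connected. The first step is to establish a density/clean-ball type estimate uniformly in $M$: since $E_M$ is $s$-minimal in $\Omega$ and agrees with $E_0$ outside, one has (by the standard minimal-surface machinery of \cite{CRS}, applied here in a scaled form) that if $E_M$ is connected then it must contain a ``bridge'' joining the top region $\{x_n>M\}$ to the bottom region $\{x_n<-M\}$, and this bridge must cross every horizontal slice $\{x_n=t\}$ for $|t|<M$. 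In particular, for $t$ in, say, $(-M/2,M/2)$, the slice $E_M\cap\{x_n=t\}$ is a nonempty (relatively closed, by density estimates) subset of the unit ball $B_1'\subset\capR^{n-1}$; I would like a lower bound $|E_M\cap\{x_n=t\}|\geq c_0>0$ for such $t$, with $c_0$ independent of $M$, which should follow from the uniform density estimates at points of $\partial E_M$ inside the cylinder.

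The second step is the energy comparison. Using $E_0$ as competitor, minimality gives $P_s(E_M;\Omega')\leq P_s(E_0;\Omega')$ for exhausting $\Omega'\Subset\Omega$, hence in the limit a bound of the form $P_s(E_M;\Omega)\leq P_s(E_0;\Omega)$, which in particular is finite and \emph{bounded uniformly in $M$} — indeed $P_s(E_0;\Omega)$ is the $s$-perimeter contribution of the two flat horizontal planes $\{x_n=\pm M\}$ as seen from the cylinder, and since these planes are far from each other and their interaction with $\Omega$ is localized near $x_n=\pm M$, this quantity is in fact bounded by a constant $C(n,s)$ independent of $M$ (translation invariance reduces each plane's contribution to a fixed finite number). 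On the other hand, if $E_M$ meets every slice $\{x_n=t\}$, $|t|<M/2$, in a set of measure $\geq c_0$, then $E_M\cap\Omega$ contains a ``column'' of volume $\geq c_0 M$, and more importantly its complement $\Omega\setminus E_M$ also meets every such slice substantially (since the slices are bounded), or else $E_M$ fills the whole slab; the point is that the mutual interaction $\int_{E_M\cap\Omega}\int_{E_M^c}|x-y|^{-n-s}\,dx\,dy$ picks up a contribution that grows with $M$. The cleanest way: compare the ``column'' part of $E_M$ against removing a thin horizontal slab near $x_n=0$; this shows that having $E_M$ cross the middle of the cylinder costs order $M^{1-s}$ in energy (the interaction of a set of diameter $\sim M$ in the $x_n$ direction with its far complement in the $x_n$ direction), contradicting the uniform bound once $M$ is large.

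The main obstacle — and where the argument needs to be carried out carefully — is making the lower bound for the energy of a connected competitor \emph{quantitatively} beat the uniform upper bound $C(n,s)$. The naive volume bound $c_0 M$ on $|E_M\cap\Omega|$ is not directly an energy bound; one must show that a connected $E_M$ forces an interaction energy of size $\gtrsim M^{\alpha}$ for some $\alpha>0$, or at least $\gtrsim\log M$, or even just $\gtrsim$ something unbounded. I expect the right estimate comes from slicing in the $x_n$ variable: write the full-space version of one term as $\int_{-\infty}^{\infty}\int_{-\infty}^{\infty} K(a,b)\,da\,db$ where $K(a,b)$ measures the interaction of slice $\{x_n=a\}\cap E_M$ with slice $\{x_n=b\}\cap E_M^c$; when $E_M$ is connected and crosses the cylinder, for every $a\in(-M/2,M/2)$ the slice has measure $\geq c_0$, and for $b$ far above $M$ the slice of $E_M^c$ is empty but for $b$ just outside $(-M,M)$... — more robustly, one bounds below $P_s(E_M;\Omega)$ by the interaction between $E_M\cap\{|x_n|<M/2\}\cap\Omega$ (volume $\geq c_0M$, contained in an infinite solid cylinder) and $E_M^c\cap\{|x_n|<M/2\}\cap\Omega$. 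If the minimizer ``mostly fills'' the slab there is instead a large interaction with the exterior datum $E_0^c\cap\Omega^c=\{|x_n|<M\}\cap\{|x'|\ge1\}$ across the lateral boundary; in either case one extracts a divergent lower bound. I would set $M_0$ to be the threshold at which this divergent lower bound exceeds $C(n,s)$, completing the contradiction; by monotonicity of the obstruction in $M$ this yields disconnectedness for all $M>M_0$, and $M_0>1$ is automatic since we may always enlarge it.
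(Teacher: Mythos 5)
Your strategy (a global energy comparison between the disconnected datum $E_0$ and a hypothetical connected minimizer) is genuinely different from the paper's, but as written it contains one concrete quantitative error and one unproven key step. The error: $P_s(E_0;\Omega)$ is \emph{not} bounded uniformly in $M$. For $x\in\Omega$ with $x_n=M+h$, $h>0$, one computes
\[
\int_{E_0^c}\frac{dy}{|x-y|^{n+s}}=\int_{h}^{h+2M}\frac{c_n\,du}{u^{1+s}}=\frac{c_n}{s}\bigl(h^{-s}-(h+2M)^{-s}\bigr),
\]
and integrating over the two half-cylinders gives $P_s(E_0;\Omega)\ge C(n,s)\int_0^\infty\bigl(h^{-s}-(h+2M)^{-s}\bigr)\,dh=C(n,s)\,\tfrac{(2M)^{1-s}}{1-s}$, which diverges as $M\to\infty$; the same $M^{1-s}$ growth already appears in the localized energy $P_s(E_0;\Omega\cap\{|x_n|<2M\})$, so truncating the cylinder does not help. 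Hence your upper bound is of order $M^{1-s}$, which is exactly the order of the lower bound produced by your ``cleanest way'' paragraph, and no contradiction follows at that level: you genuinely need the stronger lower bound of order $M$ coming from the volume estimate. That estimate rests on the uniform slice bound $|E_M\cap\{x_n=t\}\cap\Omega|\ge c_0$, which you correctly identify as the main obstacle but do not establish. It cannot come from connectedness and energy alone: the connected competitor obtained by adjoining to $E_0$ a filament of radius $\varepsilon$ across the slab has excess energy of order $\varepsilon^{n-1-s}M\to0$ as $\varepsilon\to0$, so one must invoke the density estimates for minimizers; and the interior density estimates of \cite{CRS} degenerate as the point approaches the lateral boundary $\partial\Omega$, which is precisely where a putative thin bridge could hide. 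There is also a smaller admissibility point: to use $E_0$ as a competitor in $\Omega_R:=\Omega\cap\{|x_n|<R\}$ one must first know $E_M=E_0$ on $\Omega\setminus\Omega_R$, which requires $E_M\supset\{|x_n|>M\}$ from \cite[Corollary 5.3]{CRS}.

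For comparison, the paper avoids energy comparison entirely: it slides a large ball $B_{\sqrt{M}}(te_1)$ \emph{horizontally} inside $E_M^c$ until it touches $\partial E_M$, applies the viscosity Euler--Lagrange inequality of \cite[Theorem 5.1]{CRS} at the contact point $q$, cancels the near-field contribution by reflecting the ball across $q$ (the error being $O(M^{-s/4})$ by \cite[Lemma 3.1]{DSVGraphProp}), and extracts a far-field contribution bounded below by a positive constant from the exterior datum $\{|x_n|<M\}\setminus\Omega\subset E_M^c$, reaching a contradiction for large $M$; if no touching occurs, the swept tube lies in $E_M^c$ and contains a full horizontal slab across the cylinder, so $E_M$ is disconnected. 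That pointwise machinery is exactly what is missing from both gaps in your proposal.
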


To favor the intuition,
a sketch on how we believe the minimizer
in Theorem~\ref{mainTheorem02}
looks like is given in Figure~\ref{figure-0-MINI2}.

Interestingly, the situation described in
Theorem~\ref{mainTheorem02} is
similar, but structurally different from the one exhibited by
classical minimal surfaces. Indeed,
the analogy with the classical case is given by
the disconnectedness of the minimizers. The difference
in the pattern is that classical minimal surfaces
in the framework of Theorem~\ref{mainTheorem02}
are just flat disc, and this is not the
case for their corresponding nonlocal counterpart
(as we will make
precise in Proposition~\ref{KSPN-t3igkrjjghNNSND}).

The forthcoming Sections~\ref{S:002} and~\ref{S:003}
contain the proofs of
Theorems~\ref{mainTheorem} and~\ref{mainTheorem02} respectively.
In Section~\ref{HSI9urj-90369iyktpogmn}
we will present further similarities and differences
with respect to the classical case in the framework
of large~$M$ given by
Theorem~\ref{mainTheorem02}.

\section{Proof of Theorem~\ref{mainTheorem}}\label{S:002}

Let~$E_M$ be the minimizer
selected in Theorem~\ref{mainTheorem}, see Figure~\ref{figure01}
(at this stage of the proof, we do not really know how this minimizer
looks like, so the one depicted in Figure~\ref{figure01}
will not be the ``real'' minimizer after all).

\begin{figure}
	\begin{center}
		\includegraphics[keepaspectratio,scale=0.55,angle=0]{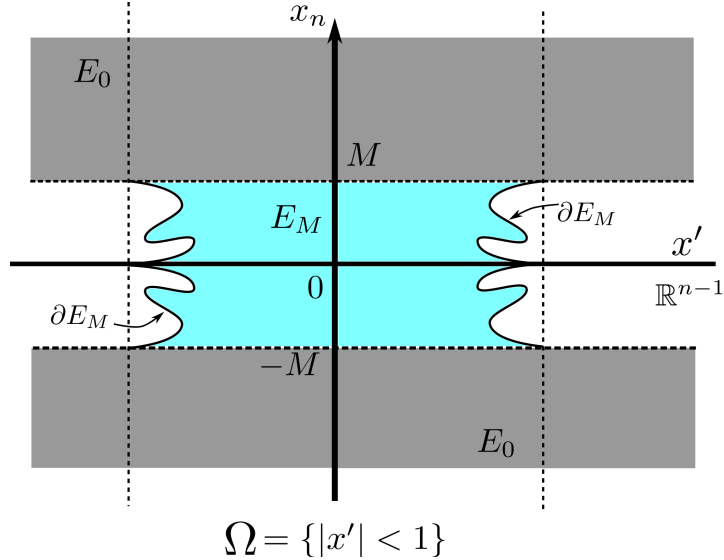}
		\caption{The situation in the proof of
			Theorem~\ref{mainTheorem}.}
		\label{figure01}
	\end{center}
\end{figure}

By~\cite[Corollary 5.3]{CRS}, we know that
\begin{equation}\label{f1}
\{ x_n>M\}\cup\{x_n<-M\}\subset E_M.
\end{equation}
Given~$t\in\capR$ and~$r\in(0,1)$,
we consider the ball of radius $r$ with center $te_n$,
where~$e_n=(0,\dots,0,1)$.
By~\eqref{f1}, we have that~$B_r(te_n)\subset E_M$ for every~$t>M+1$.
Hence, we can slide such a ball downwards till it touches~$\partial E_M$
inside~$\Omega$. The content of Theorem~\ref{mainTheorem}
is precisely that this touching does not occur, hence, by
contradiction, we suppose instead that there exist~$t_0\in\capR$
and~$r_0\in(0,1)$
such that
\begin{equation}\label{F44}
B_{r_0}(te_n)\subset E_M \quad {\mbox{for all }}t>t_0\end{equation}
with
\begin{equation}
		\partial B_{r_0}(t_0e_n) \cap \partial E_{M}  \neq \varnothing.
	\end{equation}
	Then, setting $z:=t_0 e_n$, we can choose a point $q=(q',q_n) \in \partial B_{r_0}(z) \cap \partial E_{M}$.
			
	Since $E_M$ is a local minimizer of $P_s$ in $\Omega$,
	we obtain, by using the Euler-Lagrange equation in the viscosity sense shown in \cite[Theorem 5.1]{CRS} (see also~\cite[Theorem B.9]{BLV}), that
	\begin{equation}\label{euler_lagrange_fractional}
		\int_{\capR^n}\frac{\chi_{E_M^c}(y)-\chi_{E_M}(y)}{|y-q|^{n+s}}\,dy \geq 0.
	\end{equation}
Our goal is now to produce a contradiction with~\eqref{euler_lagrange_fractional}
by showing that the left hand side is strictly negative.
To this end,
we let
$$ S_M:=\capR^{n-1}\times[q_n-2M,q_n+2M].$$
We remark that
\begin{equation}\label{F3}
E^c_M\subset S_M\setminus B_{r_0}(z).
\end{equation}
Indeed, by~\eqref{f1} we know that~$q_n\in[-M,M]$
and~$E_M^c\subset\{x_n\in[-M,M]\}$, whence~$E_M^c\subset S_M$.
This and~\eqref{F44} give~\eqref{F3}.

We also observe that~$S_M\supset \{|x_n|\le M\}$, and therefore,
in light of~\eqref{f1},
\begin{equation}\label{F66}
S_M^c\subset E_M.
\end{equation}
Moreover, using the change of variable~$y\mapsto y+q$,
\begin{equation}\label{KJMSi0ujf9htgf}
\int_{S^c_M}\frac{dy}{|y-q|^{n+s}}=
\int_{\capR^{n-1}\times((-\infty,-2M)\cup(2M,+\infty))}\frac{dy}{|y|^{n+s}}\ge
\int_{B_M(3Me_n)}\frac{dy}{|y|^{n+s}}\ge cM^{-s},
\end{equation}
for a constant~$c>0$ depending only on~$n$.

\begin{figure}
	\begin{center}
		\includegraphics[keepaspectratio,scale=0.55,angle=0]{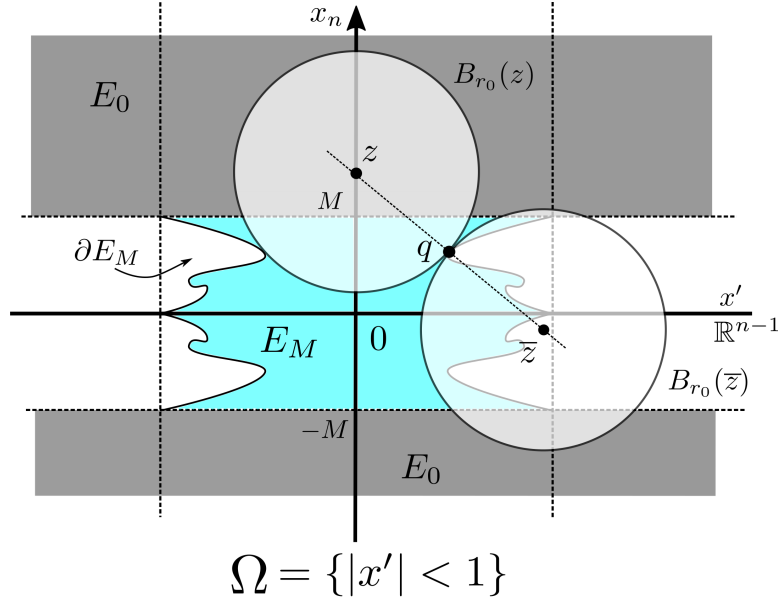}
		\caption{The touching between the ball~$B_{r_0}(z)$ and
		the symmetric ball~$B_{r_0}( \overline{z})$ at the point~$q$.}
		\label{figure02}
	\end{center}
\end{figure}

Now we set~$\overline{z}:=z+2(q-z)$ and we
consider the symmetric ball~$B_{r_0}(\overline{z})$ with respect to~$q$,
see Figure~\ref{figure02}.
Moreover, we take a free parameter~$\Lambda\ge4$, to be chosen conveniently large in what follows
and we observe that, by symmetry,
$$ \int_{S_M\cap B_{\Lambda M}(q)\cap B_{r_0}(z)}\frac{dy}{|y-q|^{n+s}}=
\int_{S_M\cap B_{\Lambda M}(q)\cap B_{r_0}(\overline z)}\frac{dy}{|y-q|^{n+s}}.$$
Also, by~\eqref{F3},
$$ \int_{S_M\cap B_{\Lambda M}(q)\cap B_{r_0}(z)}\frac{\chi_{E_M^c}(y)-\chi_{E_M}(y)}{|y-q|^{n+s}}\,dy
=-\int_{S_M\cap B_{\Lambda M}(q)\cap B_{r_0}(z)}\frac{dy}{|y-q|^{n+s}},$$
and consequently
\begin{eqnarray*}
&& \int_{S_M\cap B_{\Lambda M}(q)\cap B_{r_0}(z)}\frac{\chi_{E_M^c}(y)-\chi_{E_M}(y)}{|y-q|^{n+s}}\,dy+
\int_{S_M\cap B_{\Lambda M}(q)\cap B_{r_0}(\overline z)}\frac{\chi_{E_M^c}(y)-\chi_{E_M}(y)}{|y-q|^{n+s}}\,dy
\\ &&\qquad\le
-\int_{S_M\cap B_{\Lambda M}(q)\cap B_{r_0}(z)}\frac{dy}{|y-q|^{n+s}}+
\int_{S_M\cap B_{\Lambda M}(q)\cap B_{r_0}(\overline z)}\frac{dy}{|y-q|^{n+s}}=0.
\end{eqnarray*}
Therefore,
\begin{equation}\label{f098}
\begin{split}&
\int_{S_M\cap B_{\Lambda M}(q)}\frac{\chi_{E_M^c}(y)-\chi_{E_M}(y)}{|y-q|^{n+s}}\,dy\\
=\;&
\int_{S_M\cap B_{\Lambda M}(q)\cap B_{r_0}(z)}\frac{\chi_{E_M^c}(y)-\chi_{E_M}(y)}{|y-q|^{n+s}}\,dy+
\int_{S_M\cap B_{\Lambda M}(q)\cap B_{r_0}(\overline z)}\frac{\chi_{E_M^c}(y)-\chi_{E_M}(y)}{|y-q|^{n+s}}\,dy
\\&\qquad+\int_{S_M\cap \big(B_{\Lambda M}(q)\setminus
\big( B_{r_0}(z)\cup B_{r_0}(\overline z)\big)\big)
}\frac{\chi_{E_M^c}(y)-\chi_{E_M}(y)}{|y-q|^{n+s}}\,dy\\ \le\;&
\int_{S_M\cap \big(B_{\Lambda M}(q)\setminus
\big( B_{r_0}(z)\cup B_{r_0}(\overline z)\big)\big)
}\frac{\chi_{E_M^c}(y)-\chi_{E_M}(y)}{|y-q|^{n+s}}\,dy\\ \le\;& \int_{ B_{\Lambda M}(q)\setminus
\big( B_{r_0}(z)\cup B_{r_0}(\overline z)\big)
}\frac{dy}{|y-q|^{n+s}}\\ \le\;&C\Lambda^{1-s} M^{1-s},
\end{split}
\end{equation}
for some~$C>0$ depending only on~$n$ and~$s$,
where~\cite[Lemma 3.1]{DSVGraphProp}
has been used in the last inequality (here with~$R:=1$ and~$\lambda:=\Lambda M$).

Furthermore,
\begin{eqnarray*}&&
\int_{S_M\setminus B_{\Lambda M}(q)}\frac{\chi_{E_M^c}(y)-\chi_{E_M}(y)}{|y-q|^{n+s}}\,dy\le
\int_{S_M\setminus B_{\Lambda M}(q)}\frac{dy}{|y-q|^{n+s}}
\\&&\qquad=\int_{(\capR^{n-1}\times[-2M,2M])\setminus B_{\Lambda M}}\frac{dy}{|y|^{n+s}}\le
\int_{(\capR^{n-1}\times[-2M,2M])\setminus B_{\Lambda M}}\frac{dy}{|y'|^{n+s}}\\&&\qquad\le
\int_{\{|y'|\ge\Lambda M/2,\;|y_n|\le 2M\}}\frac{dy}{|y'|^{n+s}}=\frac{C_0}{\Lambda^{1+s}M^s},
\end{eqnarray*}
for some~$C_0>0$ depending only on~$n$ and~$s$.

Hence, combining this information with~\eqref{f098},
$$ \int_{S_M}\frac{\chi_{E_M^c}(y)-\chi_{E_M}(y)}{|y-q|^{n+s}}\,dy\le
C\Lambda^{1-s} M^{1-s}+\frac{C_0}{\Lambda^{1+s}M^s}.
$$
This, \eqref{F66} and~\eqref{KJMSi0ujf9htgf} lead to
\begin{eqnarray*}&&
\int_{\capR^n}\frac{\chi_{E_M^c}(y)-\chi_{E_M}(y)}{|y-q|^{n+s}}\,dy\\
&=&
-\int_{S_M^c}\frac{dy}{|y-q|^{n+s}}
+\int_{S_M}\frac{\chi_{E_M^c}(y)-\chi_{E_M}(y)}{|y-q|^{n+s}}\,dy\\&\le&
-cM^{-s}
+C\Lambda^{1-s} M^{1-s}+\frac{C_0}{\Lambda^{1+s}M^s}\\&=&
-cM^{-s}\left(1-\frac{C\Lambda^{1-s} M }{c}-\frac{C_0}{c\Lambda^{1+s}}
\right).
\end{eqnarray*}
Now we choose~$\Lambda:=\max\left\{4,
\left(\frac{2C_0}{c}\right)^{\frac1{1+s}}\right\}$ and we thus obtain that
$$ \int_{\capR^n}\frac{\chi_{E_M^c}(y)-\chi_{E_M}(y)}{|y-q|^{n+s}}\,dy\le
-cM^{-s}\left(\frac12-\frac{C\Lambda^{1-s} M }{c}
\right).$$
Taking now~$M$ conveniently small, we conclude that
$$ \int_{\capR^n}\frac{\chi_{E_M^c}(y)-\chi_{E_M}(y)}{|y-q|^{n+s}}\,dy\le
-\frac{cM^{-s}}4<0,$$
which produces the desired contradiction with~\eqref{euler_lagrange_fractional}.

\section{Proof of Theorem~\ref{mainTheorem02}}\label{S:003}

We let~$M>1$ to be chosen conveniently large.
Given~$t\in\capR$, we consider the ball~$B_{\sqrt{M}}( t e_1)$, where~$e_1=(1,0,\dots,0)$,
and we slide it from left to right till it touches~$\partial E_M$.
Notice indeed that~$B_{\sqrt{M}}(te_1)\subset E_0^c$ when~$t<-\sqrt{M}$
and, to prove Theorem~\ref{mainTheorem02}, we suppose by contradiction that there exists~$t_0\in\capR$ such
that~$B_{\sqrt{M}}(te_1)\subset E_M^c$ for all~$t<t_0$
with~$\partial B_{\sqrt{M}}(t_0e_1)\cap\partial E_M\ne\varnothing$.

We set~$z:=t_0e_1$ and we pick a point~$q=(q',q_n)\in\partial B_{\sqrt{M}}(z)\cap\partial E_M$.
By the Euler-Lagrange equation in the viscosity sense (see \cite[Theorem 5.1]{CRS} and \cite[Theorem B.9]{BLV}), we know that
\begin{equation}\label{ihakjb-iknnsbab66}
\int_{\capR^n}\frac{\chi_{E_M^c}(y)-\chi_{E_M}(y)}{|y-q|^{n+s}}\,dy\le0.
\end{equation}
We consider the symmetric ball with respect to~$q$, by defining~$\overline z:=z+2(q-z)$
and taking into account the ball~$B_{\sqrt{M}}(\overline{z})$,
see Figure~\ref{figure03}.

\begin{figure}
	\begin{center}
		\includegraphics[keepaspectratio,scale=0.60,angle=0]{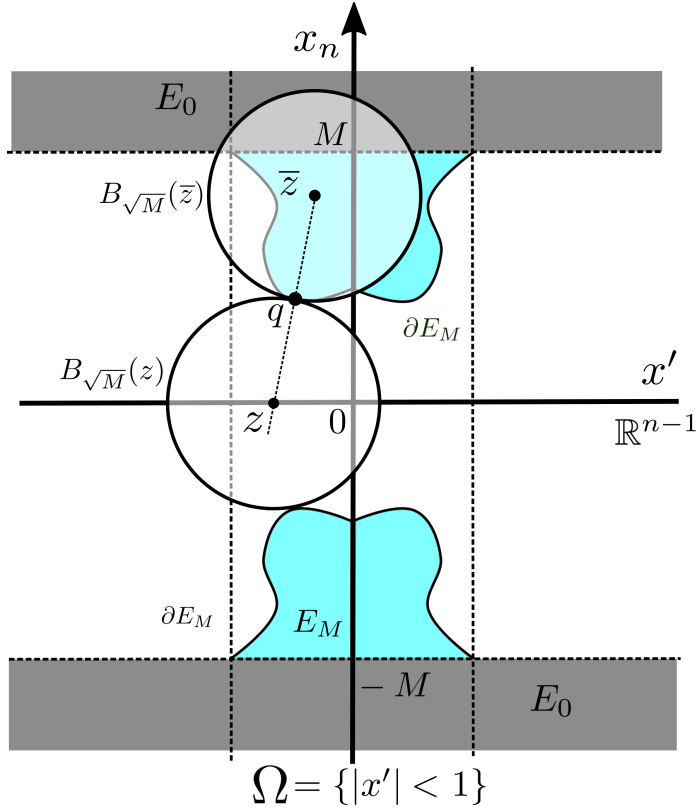}
		\caption{The touching between the ball~$B_{\sqrt{M}}(z)$ and
			the symmetric ball~$B_{\sqrt{M}}( \overline{z})$
			at the point~$q$.}
		\label{figure03}
	\end{center}
\end{figure}

We define
$$ S:=\big\{ x=(x',x_n)\in\capR^{n-1}\times\capR
{\mbox{ s.t. }}|x'-q'|\le 3 
\big\}.$$
By symmetry,
$$ \int_{S\cap B_{\sqrt{M}}(z)}\frac{dy}{|y-q|^{n+s}}=
\int_{S\cap B_{\sqrt{M}}(\overline z)}\frac{dy}{|y-q|^{n+s}}$$
and therefore
\begin{equation}\label{04msaolnc}
\begin{split}&
\int_{S}\frac{\chi_{E_M^c}(y)-\chi_{E_M}(y)}{|y-q|^{n+s}}\,dy\\
=\;&
\int_{S\cap B_{\sqrt{M}}(z)}\frac{\chi_{E_M^c}(y)-\chi_{E_M}(y)}{|y-q|^{n+s}}\,dy
+
\int_{S\cap B_{\sqrt{M}}(\overline z)}\frac{\chi_{E_M^c}(y)-\chi_{E_M}(y)}{|y-q|^{n+s}}\,dy
\\&\qquad+
\int_{S\setminus\big( B_{\sqrt{M}}(z)
\cup B_{\sqrt{M}}(\overline z)\big)}\frac{\chi_{E_M^c}(y)-\chi_{E_M}(y)}{|y-q|^{n+s}}\,dy\\
\ge\;&
\int_{S\cap B_{\sqrt{M}}(z)}\frac{dy}{|y-q|^{n+s}}-
\int_{S\cap B_{\sqrt{M}}(\overline z)}\frac{dy}{|y-q|^{n+s}}
\\&\qquad+
\int_{S\setminus\big( B_{\sqrt{M}}(z)
\cup B_{\sqrt{M}}(\overline z)\big)}\frac{\chi_{E_M^c}(y)-\chi_{E_M}(y)}{|y-q|^{n+s}}\,dy\\
\ge\;&-
\int_{S\setminus\big( B_{\sqrt{M}}(z)
\cup B_{\sqrt{M}}(\overline z)\big)}\frac{dy}{|y-q|^{n+s}}.
\end{split}
\end{equation}
Now, in view of~\cite[Lemma 3.1]{DSVGraphProp}, used here with~$R:=\sqrt{M}$
and~$\lambda:=1/\sqrt[4]{M}$, we know that
\begin{equation*}
\int_{B_{\sqrt[4]{M}}(q)\setminus\big( B_{\sqrt{M}}(z)
\cup B_{\sqrt{M}}(\overline z)\big)}\frac{dy}{|y-q|^{n+s}}\le C M^{-\frac{1+s}{4}},
\end{equation*}
for some~$C>0$ depending only on~$n$ and~$s$.
As a result,
\begin{eqnarray*}
&&\int_{S\setminus\big( B_{\sqrt{M}}(z)
\cup B_{\sqrt{M}}(\overline z)\big)}\frac{dy}{|y-q|^{n+s}}
\le
\int_{B_{\sqrt[4]{M}}(q)\setminus\big( B_{\sqrt{M}}(z)
\cup B_{\sqrt{M}}(\overline z)\big)}\frac{dy}{|y-q|^{n+s}}
+
\int_{S\setminus B_{\sqrt[4]{M}}(q)}\frac{dy}{|y-q|^{n+s}}\\
&&\qquad\le C M^{-\frac{1+s}{4}} +\int_{\capR^n\setminus B_{\sqrt[4]{M}}(q)}\frac{dy}{|y-q|^{n+s}}
= C M^{-\frac{1+s}{4}}+C_1 M^{-\frac{s}{4}}\le C_2 M^{-\frac{s}{4}} ,
\end{eqnarray*}
for some~$C_1>0$ depending only on~$n$ and~$s$, with~$C_2:=C+C_1$.

This and~\eqref{04msaolnc} lead to
\begin{eqnarray*}&&
\int_{\capR^n}\frac{\chi_{E_M^c}(y)-\chi_{E_M}(y)}{|y-q|^{n+s}}\,dy\\&=&
\int_{S}\frac{\chi_{E_M^c}(y)-\chi_{E_M}(y)}{|y-q|^{n+s}}\,dy+
\int_{S^c}\frac{\chi_{E_M^c}(y)-\chi_{E_M}(y)}{|y-q|^{n+s}}\,dy\\
&\ge&-C_2 M^{-\frac{s}{4}}
+\int_{S^c}\frac{\chi_{E_M^c}(y)-\chi_{E_M}(y)}{|y-q|^{n+s}}\,dy\\
&\ge&-C_2 M^{-\frac{s}{4}}
-\int_{S^c\cap\{|y_n|\ge M \}}\frac{dy}{|y-q|^{n+s}}
+\int_{S^c\cap\{|y_n|< M \}}\frac{\chi_{E_M^c}(y)-\chi_{E_M}(y)}{|y-q|^{n+s}}\,dy\\&\ge&
-C_2 M^{-\frac{s}{4}}
-\int_{ \{|y-q|\ge M /2\}}\frac{dy}{|y-q|^{n+s}}
+\int_{S^c\cap\{|y_n|< M \}}\frac{dy}{|y-q|^{n+s}}\\&=&
-C_2 M^{-\frac{s}{4}}
-C_3 M^{-s}
+\int_{S^c\cap\{|y_n|< M \}}\frac{dy}{|y-q|^{n+s}},
\end{eqnarray*}
for some~$C_3>0$ depending only on~$n$ and~$s$.

Thus, since~$S^c\cap\{|y_n|< M \}\supset B_1(q+5e_1)$, letting~$C_4:=C_2+C_3$ we have
\begin{eqnarray*}
&&\int_{\capR^n}\frac{\chi_{E_M^c}(y)-\chi_{E_M}(y)}{|y-q|^{n+s}}\,dy
\ge-C_4 M^{-\frac{s}{4}}
+\int_{B_1(q+5e_1) }\frac{dy}{|y-q|^{n+s}}\\&&\qquad
=-C_4 M^{-\frac{s}{4}}
+\int_{B_1(5e_1) }\frac{dy}{|y|^{n+s}}=-C_4 M^{-\frac{s}{4}}+c,
\end{eqnarray*}
for some~$c>0$ depending only on~$n$ and~$s$.
In particular, if~$M$ is sufficiently large, we deduce that the left hand side of~\eqref{ihakjb-iknnsbab66}
is strictly positive, thus reaching a contradiction with~\eqref{ihakjb-iknnsbab66}.

\section{Further remarks on Theorem~\ref{mainTheorem02}}\label{HSI9urj-90369iyktpogmn}

The goal of this section is to stress that the
result in Theorem~\ref{mainTheorem02} is, on the one hand,
related to the classical case of minimal surfaces, since both the classical
and the nonlocal regimes exhibit disconnected minimizers
for large values of~$M$, but, on the other hand,
presents some significant structural differences with respect
to the classical scenario.

More precisely, differently from the classical case,
the minimizer constructed in Theorem~\ref{mainTheorem02}
exhibits the features listed below:

\begin{proposition}\label{KSPN-t3igkrjjghNNSND}
Let~$M$ and~$E_M$ be as in Theorem~\ref{mainTheorem02}. Then,
\begin{equation}\label{AHN-0301}
E_M \supsetneqq\{x_n>M\}\cup\{x_n<-M\}.
\end{equation}
Moreover,
\begin{equation}\label{AHN-0302}
E_M \supset B_{cM^{-s}}(0,\dots,0,-M)\cup B_{cM^{-s}}(0,\dots,0,M),
\end{equation}
for some~$c>0$ depending only on~$n$ and~$s$.

In addition, if~$n=2$, given any~$\epsilon_0>0$ there exists~$c_\star>0$, depending only on~$s$ and~$\epsilon_0$, such that
\begin{equation}\label{AHN-0303}
E_M \supset \left((-1,1)\times\left(-\infty,
-M+c_\star\,M^{-\frac{(2+\epsilon_0)s}{1-s}}
\right) \right)\cup
\left((-1,1)\times\left(M-c_\star\,M^{-\frac{(2+\epsilon_0)s}{1-s}},+\infty\right) \right).
\end{equation}
\end{proposition}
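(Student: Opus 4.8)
The plan is to establish the three inclusions in increasing order of difficulty, in each case by constructing an explicit competitor set and showing the inclusion fails to hold would contradict either minimality or the viscosity Euler--Lagrange inequality, reusing the machinery already deployed for Theorem~\ref{mainTheorem02}.

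For \eqref{AHN-0301}, I would argue by contradiction and use strict comparison: if $E_M$ coincided with $\{x_n>M\}\cup\{x_n<-M\}$ inside $\Omega$ (i.e.\ equality held), then the flat discs $\{x_n=\pm M\}\cap\Omega$ would be pieces of $\partial E_M$, and one can test the viscosity Euler--Lagrange equation at an interior point of such a disc, say at the origin translated to $(0,\dots,0,M)$. At that point the set $E_M$ sits entirely on one side, so the nonlocal mean curvature integral cannot vanish: the contribution of the half-space $\{x_n>M\}$ plus the contribution of the far half-space $\{x_n<-M\}$ beats the negative contribution of the slab $\{|x_n|<M\}$ when $M$ is large, because the slab contribution decays like $M^{-s}$ while the two half-space contributions are bounded below by a constant. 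This is essentially the same computation as at the end of Section~\ref{S:003}, so it transfers directly. Hence the inclusion in \eqref{AHN-0301} must be strict.

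For \eqref{AHN-0302}, the natural approach is again a sliding-ball / barrier argument, but now with a ball of radius comparable to $M^{-s}$ centered at $(0,\dots,0,-M)$ (and symmetrically at $(0,\dots,0,M)$). One slides such a ball, contained in $E_M$ initially when it sits high up in $\{x_n>M\}$, downward; if it first touched $\partial E_M$ before reaching the stated position we would get a contact point $q$ with $|q_n|\le M$, and then the symmetrized-ball computation of Section~\ref{S:003} shows the Euler--Lagrange integral at $q$ is strictly positive once $M$ is large, a contradiction with \eqref{ihakjb-iknnsbab66}. The scale $M^{-s}$ is exactly what the slab contribution $\sim M^{-s}$ from \eqref{KJMSi0ujf9htgf}-type estimates forces: the gain from the slab must dominate the loss from the thin region of radius $cM^{-s}$ near the ball, and by \cite[Lemma 3.1]{DSVGraphProp} that thin-region loss is of order $(M^{-s})^{1-s}M^{\text{(correction)}}$, which is lower order. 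I would mirror the algebra used for Theorem~\ref{mainTheorem} (where the roles of the radius and $M$ are analogous) to fix the constant $c$.

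The hard part is \eqref{AHN-0303}, the two-dimensional quantitative stickiness bound, where the exponent $\tfrac{(2+\epsilon_0)s}{1-s}$ must be extracted. Here the geometry is that in $n=2$ the boundary $\partial E_M$ near $\{x_2=M\}$ is (by standard regularity of nonlocal minimal surfaces, up to the obstacle) a graph, and I would set up a clean-ball argument with a ball of radius $\rho=\rho(M)$ tangent to the side $\{x_1=\pm1\}$ of the strip at height just below $M$; the claim is that for $\rho$ as small as $c_\star M^{-(2+\epsilon_0)s/(1-s)}$ the ball stays inside $E_M$, i.e.\ the minimizer sticks to the top all the way across. The sharpness of the exponent comes from balancing: the favorable contribution toward $E_M$ coming from the bulk $\{x_2>M\}$ seen from the contact point scales like $\rho^{-s}$ when the contact point is within distance $\sim\rho$ of height $M$; the competing unfavorable contribution is the slab $\{|x_2|<M\}$ seen from near the side of the strip, which is of order $M^{-s}$ but gets an extra $\rho$-dependent enhancement from the narrow channel near $\{x_1=\pm1\}$, estimated via \cite[Lemma 3.1]{DSVGraphProp} as roughly $\rho^{1-s}$ times a logarithmic or polynomial correction in $M$; equating $\rho^{-s}\gtrsim M^{-s}+\rho^{1-s}(\log\text{ or }M^{\epsilon_0}\text{ factor})$ and solving gives the threshold $\rho\sim M^{-(2+\epsilon_0)s/(1-s)}$, with the $\epsilon_0$ absorbing the sub-polynomial correction. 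I expect the bookkeeping of these competing terms — in particular locating the worst-case contact point and controlling the channel integral near the corner of the obstacle — to be the main technical obstacle, and I would handle it by splitting $\capR^2$ into (i) the ball $B_\rho$ and its reflection about $q$, where symmetry cancels the leading term exactly as in \eqref{04msaolnc}, (ii) an annular region $B_{\rho^\alpha}(q)\setminus B_\rho$ treated by \cite[Lemma 3.1]{DSVGraphProp} with a suitable intermediate radius $\rho^\alpha$, (iii) the contribution of $\{x_2>M\}$ which is bounded below by a positive multiple of $\rho^{-s}$, and (iv) the remaining tail, which is lower order; choosing $\alpha$ (equivalently, the relation between the intermediate radius and $\rho$) optimally yields the stated exponent up to the $\epsilon_0$ loss.
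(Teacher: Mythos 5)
Your strategy for \eqref{AHN-0301} (test the Euler--Lagrange equation at a point of the flat disc and show the integral cannot vanish) is the paper's strategy, and the conclusion is correct, but your quantitative justification is off: the slab $\{|x_n|<M\}$ is \emph{adjacent} to the contact point, so its contribution is not ``of order $M^{-s}$'' --- both the slab and the adjacent half-space give divergent integrals that only make sense after a principal-value cancellation. The paper implements this cancellation exactly, by the reflection $z_n\mapsto -z_n$, and what survives is precisely $-2\int_{\{|z_n|\ge 2M\}}|z|^{-n-s}\,dz<0$; in particular no largeness of $M$ is needed and it is the \emph{far} component of $E_0$, not the near one, that tips the balance. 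This part of your proposal is salvageable with that correction.

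The genuine gap is in \eqref{AHN-0302} and \eqref{AHN-0303}, where you slide \emph{small balls} of radius $\rho\sim M^{-s}$ (resp.\ $\rho\sim M^{-(2+\epsilon_0)s/(1-s)}$). An interior tangent ball of radius $\rho$ produces, at the contact point, an unfavorable contribution of order $\rho^{-s}$: with $R=\rho$, \cite[Lemma 3.1]{DSVGraphProp} bounds the lens region inside $B_\lambda(q)$ by $C\lambda^{1-s}/\rho$, while the tail $\{|y_n|\le M\}\setminus B_\lambda(q)$ contributes at least $c\lambda^{-s}$; making the first lower order than the gain $\rho^{-s}$ forces $\lambda\ll\rho$, making the second lower order forces $\lambda\gg\rho$, so no intermediate radius exists and your claim that the loss is ``lower order'' is false --- the argument degenerates into an unproved comparison of absolute constants (morally, a ball of radius $\rho$ has nonlocal mean curvature $\sim\rho^{-s}$, which is exactly the size of everything you hope to beat). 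Likewise your balance $\rho^{-s}\gtrsim M^{-s}+\rho^{1-s}(\cdots)$ holds for \emph{every} small $\rho$, so it cannot single out the exponent $\tfrac{(2+\epsilon_0)s}{1-s}$. The paper's essential idea, which is missing from your proposal, is to slide \emph{nearly flat} barriers whose nonlocal mean curvature is small rather than large: for \eqref{AHN-0302} the graph $F=\{x_n<\eta\phi(x')\}$ with a smooth bump $\phi$, whose mean curvature is $O(\eta)$ by the $C^{1,\alpha}$-perturbation theorem \cite[Theorem~1.1]{COZ}; gluing in the far half-space $\{x_n>4M\}$ subtracts a fixed $c_0M^{-s}$ and makes the barrier a strict subsolution once $\eta\le cM^{-s}$, and sliding it up from $-2M$ yields $E_M\supset\{x_n<-M+cM^{-s}\phi(x')\}\supset B_{cM^{-s}}(0,\dots,0,-M)$. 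For \eqref{AHN-0303} the exponent is not derived by any scaling heuristic but imported from the sticky barrier of \cite[Corollary~7.2]{DSVboundary}: a planar set $H$ with nonlocal mean curvature $\le\bar C\delta$ that dips by $\delta^{\frac{2+\epsilon_0}{1-s}}$ across $(-1,1)$; the same ``add $\{x_2>4M\}$ and slide'' device with $\delta\sim M^{-s}$ then gives the stated bound. Without these two barrier constructions, your proof of the second and third inclusions does not go through.
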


\begin{figure}
	\begin{center}
		\includegraphics[keepaspectratio,scale=0.55,angle=0]{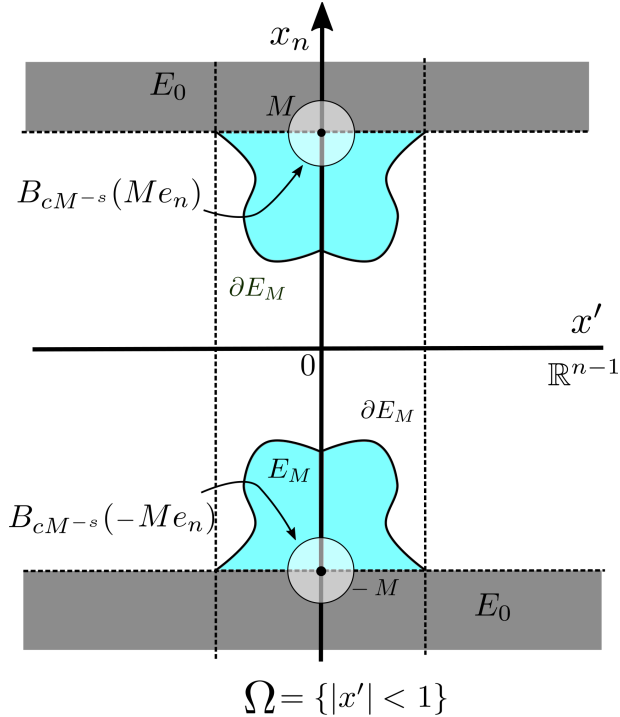}
		\caption{A sketch of an argument in Proposition \ref{KSPN-t3igkrjjghNNSND}.}
		\label{figure04}
	\end{center}
\end{figure}

We remark that~\eqref{AHN-0302}
and~\eqref{AHN-0303} are quantitative versions
of~\eqref{AHN-0301} and a sketch of an argument
used in the proof
of Proposition~\ref{KSPN-t3igkrjjghNNSND} is
depicted in Figure~\ref{figure04}. Though~\eqref{AHN-0302}
and~\eqref{AHN-0303} provide a stronger
result than~\eqref{AHN-0301}, we give an independent proof
of~\eqref{AHN-0301} based on a simple symmetry argument,
while the proofs of~\eqref{AHN-0302} and~\eqref{AHN-0303}
rely on finer quantitative arguments.
We also point out that~\eqref{AHN-0303} provides
an explicit quantitative bound on the stickiness property 
in dimension~$2$.

\begin{proof}[Proof of Proposition~\ref{KSPN-t3igkrjjghNNSND}] To prove~\eqref{AHN-0301},
we need to show that the inclusion in~\eqref{f1} is strict.
For this, we argue by contradiction and suppose that~$E_M =\{x_n>M\}\cup\{x_n<-M\}$.
Then we can use the Euler-Lagrange equation in the viscosity sense shown in \cite[Theorem 5.1]{CRS} 
at the point~$q:=(0,\dots,0,-M)\in\partial E_M$, thus finding that
	\begin{equation}\label{UJONS9uhg}\begin{split}&
	0=	\int_{\capR^n}\frac{\chi_{E_M^c}(y)-\chi_{E_M}(y)}{|y-q|^{n+s}}\,dy=
	\int_{\{|y_n|<M\}}\frac{dy}{|y-q|^{n+s}}
	-
	\int_{\{|y_n|\ge M\}}\frac{dy}{|y-q|^{n+s}}\\&\qquad\qquad=
	\int_{\{z_n\in(0,2M)\}}\frac{dz}{|z|^{n+s}}
	-
	\int_{\{z_n\in(-\infty,0]\cup[2M,+\infty)\}}\frac{dz}{|z|^{n+s}}.
\end{split}
	\end{equation}
Also, by the transformation~$(z',z_n)\mapsto(z',-z_n)$, we see that
$$ \int_{\{z_n\in(0,2M)\}}\frac{dz}{|z|^{n+s}}
=\int_{\{z_n\in(-2M,0)\}}\frac{dz}{|z|^{n+s}},$$
and therefore~\eqref{UJONS9uhg} gives that
$$ 0=-\int_{\{z_n\in(-\infty,-2M]\cup[2M,+\infty)\}}\frac{dz}{|z|^{n+s}}
<0.$$
This contradiction proves~\eqref{AHN-0301},
and we now deal with the proof of~\eqref{AHN-0302}.
To this end, we let~$\phi\in C^\infty_0( \capR^{n-1},\,[0,1])$
with~$\phi(x')=1$ if~$|x'|\le1/2$ and~$\phi(x')=0$ if~$|x'|\ge3/4$. Given~$\eta>0$, we define
$$ F:=\{ x_n <\eta\phi(x')\}$$
and we claim that, for every~$p\in\partial F$,
\begin{equation}\label{KSM-OLS-pel}
\int_{\capR^n}\frac{\chi_{F^c}(y)-\chi_{F}(y)}{|y-p|^{n+s}}\,dy\le C_0\eta,
\end{equation}
for some~$C_0>0$ depending only on~$n$, $s$ and~$\phi$.
To prove this, 
we let
$$ \Psi(x',x_n):=(x',x_n+ \eta\phi(x')) \quad
{\mbox{ and }}\quad\Phi(x):=\Psi(x)-x=(0,\dots,0,\eta\phi(x'))$$
Notice that~$F=\Psi(\{x_n<0\})$ and the Jacobian of~$\Phi$
is bounded by~$C\eta$, together with its derivatives, for some~$C>0$ depending only on~$n$ and~$\eta$.
Furthermore, the inverse of~$\Psi$ is given by
$$ \Psi^{-1}(x)=(x',x_n- \eta\phi(x'))$$
and, setting~$\Xi(x):=\Psi^{-1}(x)-x=-(0,\dots,0,\eta\phi(x'))$,
we find that also the Jacobian of~$\Xi$
is bounded by~$C\eta$. Consequently, we are in the position
of exploiting~\cite[Theorem~1.1]{COZ}
and deduce that
$$
\int_{\capR^n}\frac{\chi_{F^c}(y)-\chi_{F}(y)}{|y-p|^{n+s}}\,dy\le 
\int_{\capR^n}\frac{\chi_{\{y_n>0\}}(y)-\chi_{\{y_n<0\}}(y)}{|y-\Psi^{-1}(p)|^{n+s}}\,dy+C_0\eta
=C_0\eta,$$
for some~$C_0>0$ depending only on~$n$, $s$ and~$\phi$,
thus completing the proof of~\eqref{KSM-OLS-pel}.

Now we define
$$ G:=F\cup \{x_n>4M\},$$
we point out that this union is disjoint
for large~$M$ and small~$\eta$,
and we claim that there exists~$c>0$,
depending only on~$n$, $s$ and~$\phi$,
such that if~$\eta\in(0,cM^{-s}]$ then,
for every~$p\in\partial F$,
\begin{equation}\label{KSM-OLS-pel-2}
\int_{\capR^n}\frac{\chi_{G^c}(y)-\chi_{G}(y)}{|y-p|^{n+s}}\,dy<0.
\end{equation}
Indeed, we have that~$\chi_{G}=\chi_{F}+\chi_{\{x_n>4M\}}$,
whence~$\chi_{G^c}=1-\chi_{G}=1-\chi_{F}-\chi_{\{x_n>4M\}}=
\chi_{F^c}-\chi_{\{x_n>4M\}}$.
Accordingly, we have that~$\chi_{G^c}-\chi_{G}
=\chi_{F^c}-\chi_{F}-2\chi_{\{x_n>4M\}}$
and therefore,
using~\eqref{KSM-OLS-pel},
\begin{eqnarray*}
&& \int_{\capR^n}\frac{\chi_{G^c}(y)-\chi_{G}(y)}{|y-p|^{n+s}}\,dy=
\int_{\capR^n}\frac{\chi_{F^c}(y)-\chi_{F}(y)}{|y-p|^{n+s}}\,dy-
2\int_{\{y_n>4M\}}\frac{dy}{|y-p|^{n+s}}\\&&\qquad\le C_0\eta-
2\int_{(-M,M)^{n-1}\times(4M,5M)}\frac{dy}{|y-p|^{n+s}}\le
C_0\eta-c_0 M^{-s},
\end{eqnarray*}
for some~$c_0>0$ depending only on~$n$ and~$s$,
which plainly leads to~\eqref{KSM-OLS-pel-2}.

By means of~\eqref{KSM-OLS-pel-2}, we can thus use the set~$G$
as a sliding barrier from below with~$\eta:=cM^{-s}$
(starting the sliding from a vertical translation of the set~$G$
equal to~$-2M$)
and find that~$E_M\supset\{
x_n<-M+cM^{-s}\phi(x')\}$. In particular,
we see that~$E_M\supset \left[-\frac12,\frac12\right]^{n-1}\times(-\infty,-M+cM^{-s}]\supset B_{cM^{-s}}(0,\dots,0,-M)$.

Similarly, one proves that~$E_M\supset B_{cM^{-s}}(0,\dots,0,M)$,
thus completing the proof of~\eqref{AHN-0302}.

Now we suppose that~$n=2$ and we establish~\eqref{AHN-0303}.
For this, we fix~$\epsilon_0>0$, we consider a suitably small~$\delta>0$ and
we
exploit~\cite[Corollary 7.2]{DSVboundary} to construct a set~$H\subset\capR^2$ such that
\begin{eqnarray*}
&& H\subset\{x_2<\delta\},\\
&& H\cap\{x_1<-1\}=(-\infty,-1)\times(-\infty,0),\\
&& H\cap\{x_1>1\}=(1,+\infty)\times(-\infty,0),\\
&& H\supset (-1,1)\times\left(
-\infty,\delta^{\frac{2+\epsilon_0}{1-s}}\right)\\
{\mbox{and }}&&
\int_{\capR^2}\frac{\chi_{H^c}(y)-\chi_{H}(y)}{|y-p|^{2+s}}\,dy\le\bar{C}\delta
\end{eqnarray*}
for every~$p=(p_1,p_2)\in \partial H$
with~$|p_1|<1$, where~$\bar{C}>0$ depends only on~$s$ and~$\epsilon_0$.

We define
$$ L:=H\cup \{x_2>4M\},$$
and we see that~$\chi_{L^c}-\chi_{L}
=\chi_{H^c}-\chi_{H}-2\chi_{\{x_2>4M\}}$ and thus
\begin{eqnarray*}
&&\int_{\capR^2}\frac{\chi_{L^c}(y)-\chi_{L}(y)}{|y-p|^{2+s}}\,dy\le
\int_{\capR^2}\frac{\chi_{H^c}(y)-\chi_{H}(y)}{|y-p|^{2+s}}\,dy
-2\int_{\{y_2>4M\}}\frac{dy}{|y-p|^{2+s}}\\&&\qquad\le\bar{C}\delta
-2\int_{(-M,M)\times(4M,5M)}\frac{dy}{|y-p|^{2+s}}\le\bar{C}
\delta-\bar{c}M^{-s}<0
\end{eqnarray*}
for every~$p=(p_1,p_2)\in \partial H$
with~$|p_1|<1$, where~$\bar{c}>0$ depends only on~$s$,
and~$\delta:=\frac{\bar{c}M^{-s}}{2\bar{C}}$. 

In this way, we can use~$L$ as sliding barrier from below
(starting the sliding from a vertical translation of the set~$L$
equal to~$-2M$)
and deduce that
$$E_M\cap\{|x_1|<1\}\supset
\left(
-\infty,-M+\delta^{\frac{2+\epsilon_0}{1-s}}\right)=
\left(
-\infty,-M+c_\star\,M^{-\frac{(2+\epsilon_0)s}{1-s}}\right)$$
for some~$c_\star>0$.
Similarly, one finds that
$$E_M\cap\{|x_1|<1\}\supset
\left(
M-c_\star\,M^{-\frac{(2+\epsilon_0)s}{1-s}},+\infty\right).$$
The proof of~\eqref{AHN-0303}
is thereby complete.\end{proof}

\section*{Acknowledgments}

Supported
by the Australian Research Council DECRA DE180100957
``PDEs, free boundaries and applications''
and the Australian Laureate Fellowship
FL190100081
``Minimal surfaces, free boundaries and partial differential equations''.

The first and third authors are members of INdAM and AustMS.

This work has been carried out during a very fruitful visit
of the second author to the University of Western Australia,
whose warm hospitality is a pleasure to acknowledge.

\end{document}